\newcommand{\qform}[1]{{\left\langle{#1}\right\rangle}}
\newcommand{\pform}[1]{{\langle\!\langle{#1}\rangle\!\rangle}}
\newcommand{\C}{{\mathcal C}}
\newcommand{\ot}{\otimes}
\newcommand{\io}{{}^\iota}
\newcommand{\la}{\lambda} 
\newcommand{\tra}{{\tr}_\star}
\DeclareMathOperator{\Ad}{Ad}
\DeclareMathOperator{\ad}{ad}
\DeclareMathOperator{\tr}{tr}
\DeclareMathOperator{\End}{End}
\DeclareMathOperator{\Int}{Int}
\newtheorem{prop}{Proposition}[section]
\newtheorem{lem}[prop]{Lemma}
\newtheorem{thm}[prop]{Theorem}
\theoremstyle{remark}
\newtheorem*{remark}{Remark}
\title[Quadratic forms of dimension 8]{Quadratic forms of dimension $8$ with trivial discriminant
  and Clifford algebra of index $4$.}
\author[A. Masquelein]{Alexandre Masquelein} 
\address{D\'epartement de Math\'ematiques\\ 
Universit\'e catholique de Louvain\\
Chemin du cyclotron, 2\\
B1348 Louvain-la-Neuve\\
Belgique}
\email{Alexandre.Masquelein@uclouvain.be}
\author[A. Qu\'eguiner-Mathieu]{Anne Qu\'eguiner-Mathieu} 
\address{Universit\'e Paris 13 (LAGA)\\
CNRS (UMR 7539)\\
Universit\'e Paris 12 (IUFM)\\
93430 Villetaneuse, France}
\email{queguin@math.univ-paris13.fr}
\urladdr{http://www-math.univ-paris13.fr/{\textasciitilde}queguin/}
\author[J.-P. Tignol]{Jean-Pierre Tignol}
\address{D\'epartement de Math\'ematiques\\ 
Universit\'e catholique de Louvain\\
Chemin du cyclotron, 2\\
B1348 Louvain-la-Neuve\\
Belgique}
\email{jean-pierre.tignol@uclouvain.be}
\urladdr{http://wwww.math.ucl.ac.be/membres/tignol}
\date{\today}
\thanks{The third author is supported in part by the F.R.S.--FNRS (Belgium)}
\begin{document}

\begin{abstract}
Izhboldin and Karpenko proved in~\cite[Thm 16.10]{IK00} that any
quadratic form of dimension $8$ with trivial discriminant and Clifford
algebra of index $4$ is isometric to the transfer, with respect to some quadratic
\'etale extension, of a quadratic form similar to a two-fold Pfister form. 
We give a new proof of this result, based on a theorem of
decomposability for degree $8$ and index $4$ algebras with orthogonal
involution.  
\end{abstract}

\maketitle

Let $WF$ denote the Witt ring of a field $F$ 
of characteristic different from $2$. 
As explained in~\cite[X.5 and XII.2]{Lam2}, one would like to describe 
those quadratic forms whose Witt class belongs to the 
$n$th power $I^nF$ of the fundamental ideal $IF$ of $WF$. 
By the Arason-Pfister Hauptsatz, such a form is hyperbolic if it has dimension $<2^n$
and similar to a Pfister form if it has dimension $2^n$. 
More generally, Vishik's Gap Theorem gives the possible dimensions of anisotropic
forms in $I^nF$. 

In addition, one may describe explicitly, for some small values of
$n$, low dimensional anisotropic quadratic forms in $I^nF$. 
This is the case, in particular, for $n=2$, that is for even-dimensional quadratic forms with trivial
discriminant. 
In dimension $6$, it is well known that such a form is similar to an
Albert form, and uniquely determined up to similarity by its Clifford
invariant. 
In dimension $8$, if the index of 
the Clifford algebra is $\leq 4$, Izhboldin and Karpenko proved in~\cite[Thm 16.10]{IK00} that 
it is isometric to the transfer, with respect to some quadratic
\'etale extension, of a quadratic form similar to a two-fold Pfister
form. 

The purpose of this paper is to give a new proof of Izhboldin and
Karpenko's result. 
Our proof is in the framework of
algebras with involution, and does not use Rost's description of
$14$-dimensional forms in $I^3F$ (see~\cite[Rmk
  16.11.2]{IK00}). 
More precisely, we use triality~\cite[(42.3)]{KMRT} to translate the
question into a question on algebras of degree $8$ and index $4$ with orthogonal
involution. Our main tool then is a decomposability theorem
(Thm.~\ref{dec.thm}), proven in \S~\ref{dec.sec}. 
We also use a refinement 
of a statement of Arason~\cite[4.18]{A} describing the even part of
the Clifford algebra of a
transfer (see Prop.~\ref{clif.prop} below). 

\section{Notations and statement of the theorem} 

Throughout the paper, we work over a base field $F$ of characteristic
different from $2$. We refer the reader to~\cite{KMRT} and~\cite{Lam2}
for background information on algebras with involution and on
quadratic forms. 
However, we depart from the notation in~\cite{Lam2} by
using $\pform{a_1,\dots,a_n}$ to denote the $n$-fold Pfister form
$\otimes_{i=1}^n\qform{1,-a_i}$. 
For any quadratic space $(V,\phi)$ over $F$, 
we let $\Ad_\phi$ be the algebra with
involution $(\End_F(V),\ad_\phi)$, where $\ad_\phi$ is the adjoint
involution with 
respect to $\phi$, denoted by $\sigma_\phi$ in~\cite{KMRT}. 

For any field extension $L/F$, we denote by $GP_n(L)$ the set of quadratic
forms that are similar to $n$-fold Pfister forms. This
notation extends to the quadratic \'etale extension $F\times F$ by
$GP_n(F\times F)=GP_n(F)\times GP_n(F)$. 
For any quadratic form $\psi$ over $L$, let $\C(\psi)$ be  
its full Clifford algebra, with even part $\C_0(\psi)$. 
Both $\C(\psi)$ and $\C_0(\psi)$ are endowed with a canonical
involution, which is the identity on the underlying vector space, denoted by $\gamma$
(see~\cite[p.89]{KMRT}). If $\psi$ has even dimension and
trivial discriminant, then its even Clifford algebra splits as a
direct product $\C_+(\psi)\times \C_-(\psi)$, for some isomorphic central simple algebras
$\C_+(\psi)$ and $\C_-(\psi)$ over $F$ (see~\cite[V, Thm 2.5]{Lam2}). 
Those algebras are Brauer-equivalent to the full Clifford algebra of
$\psi$ and their Brauer class is the Clifford invariant of $\psi$. 
Assume moreover that
$\dim(\psi)\equiv 0\bmod 4$. As explained in~\cite[(8.4)]{KMRT}, the involution $\gamma$ then induces an
involution on each factor of $\C_0(\psi)$, and one may easily check
that the isomorphism between the two factors described in the proof of~\cite[V, Thm
  2.5]{Lam2} preserves the involution, so that we
actually get a decomposition $(\C_0(\psi),\gamma)\simeq
(\C_+(\psi),\gamma_+)\times(\C_-(\psi),\gamma_-)$, with
$(\C_+(\psi),\gamma_+)\simeq(\C_-(\psi),\gamma_-)$. 

Let $L/F$ be a quadratic field extension. 
For any quadratic form $\psi$ over $L$, we let $\tra(\psi)$ be the transfer of
$\psi$, associated to the trace map $\tr:L\rightarrow F$, as defined in 
~\cite[VII.1.2]{Lam2}. 
This definition extends to the split \'etale case $L=F\times F$ 
and leads to $\tra(\psi,\psi')=\psi+\psi'$. 
On the other hand, for any algebra $A$ over
$L$, we let $N_{L/F}(A)$ be its norm, as defined in~\cite[\S
  3.B]{KMRT}. Recall that the Brauer class of $N_{L/F}(A)$ is the
corestriction of the Brauer class of $A$. 
Moreover, if $A$ is endowed with an involution of the first kind $\sigma$, then 
the tensor product $\sigma\ot\sigma$ restricts to an involution
$N_{L/F}(\sigma)$ on $N_{L/F}(A)$. We use the following notation: 
$N_{L/F}(A,\sigma)=(N_{L/F}(A),N_{L/F}(\sigma))$. 
In the split \'etale case, we get 
$N_{F\times
  F/F}((A,\sigma),(A',\sigma'))=(A,\sigma)\ot(A',\sigma')$
(see~\cite[\S 15.B]{KMRT}). 

\medskip
Let $(A,\sigma)$ be a degree $8$ algebra with orthogonal involution. 
We assume that $(A,\sigma)$ is {\em totally decomposable}, that is,
isomorphic to a tensor product of three
quaternion algebras with involution, 
\[(A,\sigma)=\ot_{i=1}^3(Q_i,\sigma_i).\] 
If $A$ is split (resp. has index $2$), then $(A,\sigma)$ admits a
decomposition as above in which
each quaternion algebra (resp. each but one) is split
(see~\cite{Becher}). Our main result is the following theorem: 
\begin{thm} 
\label{dec.thm}
Let $(A,\sigma)$ be a degree $8$ totally decomposable algebra with orthogonal involution. 
If the index of $A$ is $\leq 4$, then there exists $\lambda\in F^\times$
and a biquaternion
algebra with orthogonal involution $(D,\theta)$ such that 
\[(A,\sigma)\simeq (D,\theta)\ot\Ad_\pform{\lambda}.\]
\end{thm}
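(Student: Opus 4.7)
The plan is to proceed by case analysis on $\mathrm{ind}(A)\in\{1,2,4\}$, the goal in each case being to exhibit within $(A,\sigma)$ a split $\sigma$-stable quaternion subalgebra on which the restriction of $\sigma$ takes the form $\Ad_{\pform{\lambda}}$; its centralizer in $(A,\sigma)$ then supplies the biquaternion factor $(D,\theta)$ with orthogonal involution.

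For $\mathrm{ind}(A)\le 2$, I invoke the result of Becher recalled above, which yields a decomposition $(A,\sigma)=\ot_{i=1}^{3}(Q_i,\sigma_i)$ in which at least two of the $Q_i$ are split. Since a tensor product of two involutions is orthogonal precisely when the two factors share the same type and symplectic otherwise, the number of symplectic $\sigma_i$ must be even. If one of the available split factors carries an orthogonal involution, it is of the form $\Ad_{\pform{\lambda}}$ and can be factored out, the tensor product of the two remaining $(Q_i,\sigma_i)$ providing the biquaternion $(D,\theta)$. Otherwise both available split factors carry the canonical symplectic involution of $M_2(F)$, and their tensor product is an orthogonal involution on $M_4(F)$ adjoint to the hyperbolic form $\qform{1,-1,1,-1}$. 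Writing this form as $\pform{1}\ot\qform{1,-\mu}$ for an arbitrary $\mu\in F^\times$ and translating back to algebras with involution again exposes an $\Ad_{\pform{\lambda}}$ tensor factor.

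The substantive case is $\mathrm{ind}(A)=4$. I would first try to reduce to the previous situation: for any pair $(i,j)$ such that $Q_i\ot Q_j$ has index at most $2$ and $\sigma_i,\sigma_j$ are of compatible type, Becher's theorem applied to the biquaternion $(Q_i\ot Q_j,\sigma_i\ot\sigma_j)$ produces a new decomposition of this factor containing a split quaternion, after which the earlier analysis applies to the rearranged decomposition of $(A,\sigma)$. The genuinely hard subcase is when every pairwise tensor product is a division biquaternion, so that no rearrangement among the three factors exposes a split quaternion. To treat this subcase I would exploit the Clifford-algebra description of $(A,\sigma)$: for totally decomposable orthogonal involutions in degree $8$ one can verify, using the product formula for discriminants of tensor products of involutions on quaternions, that the discriminant of $\sigma$ is trivial. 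Consequently the Clifford algebra of $(A,\sigma)$ decomposes as a product of two central simple $F$-algebras with involution, whose Brauer classes are linked to $[A]$ by the triality relations of~\cite[(42.3)]{KMRT}. The index bound $\mathrm{ind}(A)=4$ then constrains these Brauer classes, and from the resulting rigid configuration I would extract an explicit $\lambda\in F^\times$ and a $\sigma$-stable split quaternion realizing $\Ad_{\pform{\lambda}}$.

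The main obstacle is precisely this last subcase of $\mathrm{ind}(A)=4$, in which every pair of quaternion factors yields a division biquaternion. Here neither Becher's theorem nor a rearrangement of the three factors suffices, and the argument must combine triality with the index constraint in a quantitative way. Turning this combined information into an explicit $\Ad_{\pform{\lambda}}$ tensor factor is, I expect, the technical heart of the proof.
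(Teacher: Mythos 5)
Your treatment of the cases $\mathrm{ind}(A)\le 2$ is essentially correct and matches the paper, which disposes of them by citing Becher's result; your case analysis on the types of the involutions on the split factors is a reasonable fleshing-out of that remark.

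The index-$4$ case, however, contains a genuine gap. Your preliminary reduction (``rearrange a pair $Q_i\otimes Q_j$ of index $\le 2$'') does not work as stated: even when Becher's theorem produces a new decomposition with one split factor, nothing guarantees that the involution on that split factor is orthogonal; if it is the canonical symplectic involution, you do not obtain an $\Ad_{\pform{\lambda}}$ factor, and with only one split quaternion you cannot run the hyperbolic-form trick that handled the index-$\le 2$ case. More importantly, for the hard subcase you yourself identify --- where no pairwise rearrangement exposes a split factor --- the proposal offers only a plan (``combine triality with the index constraint in a quantitative way'') rather than an argument, and this subcase is precisely the content of the theorem. So the proposal does not prove the statement.

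For comparison, the paper's proof of the index-$4$ case does not manipulate the tensor decomposition at all. It writes $(A,\sigma)=(\End_D(M),\ad_{h'})$ where $D$ is the Brauer-equivalent biquaternion division algebra, $\theta$ is an orthogonal involution on $D$, and $h'=\qform{1,-a}$ is a rank-$2$ hermitian form over $(D,\theta)$ with $a\in D^\times$ $\theta$-symmetric. The crux is Lemma~\ref{inv.lem}: $\theta$ and $\theta'=\Int(a^{-1})\circ\theta$ are conjugate as involutions of $D$. This is proved via the Lewis--Tignol criterion that two orthogonal involutions of a biquaternion algebra are conjugate if and only if their Clifford algebras are isomorphic as $F$-algebras, combined with the observation that $(A,\sigma)$ is the orthogonal sum (in Dejaiffe's sense) of $(D,\theta)$ and $(D,\theta')$, so that total decomposability (trivial discriminant, split Clifford component, by \cite[(42.11)]{KMRT}) forces $\disc\theta=\disc\theta'$ and $\C(D,\theta)\simeq\C(D,\theta')$ up to the choice of identification of centers. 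Conjugacy of $\theta$ and $\theta'$ then yields $a=\lambda\theta(u)u$ for some $u\in A^\times$ and $\lambda\in F^\times$, whence $h'\simeq\qform{1,-\lambda}$ and $(A,\sigma)\simeq(D,\theta)\otimes\Ad_\pform{\lambda}$. This hermitian-form and Clifford-algebra argument is the concrete mechanism your sketch is missing.
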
 
The theorem readily follows from Becher's results mentioned above
if $A$ has index $1$ or $2$; it is proven in~\S~\ref{dec.sec}
for algebras of index $4$. 
For algebras of index $\leq 2$, we may even
assume that $(D,\theta)$ decomposes as a tensor product of two
quaternion algebras with involution; this is not the case anymore if
$A$ has index $4$, as was shown by Sivatski~\cite[Prop. 5]{Siv}. 

Using triality, we easily deduce the following from Theorem~\ref{dec.thm}:  
\begin{thm}[Izhboldin-Karpenko]
Let $\phi$ be an $8$-dimensional quadratic form over $F$. The
following are equivalent: 

(i) $\phi$ has trivial
discriminant and Clifford invariant of index $\leq 4$;

(ii) there exists a 
quadratic \'etale extension $L/F$ and a form $\psi\in GP_2(L)$ such that 
$\phi=\tra(\psi)$. 
\end{thm}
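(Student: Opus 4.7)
The plan is to translate, via triality, the question on $\phi$ into one about a degree $8$ algebra with orthogonal involution, apply Theorem~\ref{dec.thm} there, and then invert Proposition~\ref{clif.prop} to express $\phi$ as a transfer.

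For the easy direction (ii)$\Rightarrow$(i), suppose $\phi=\tra(\psi)$ with $\psi\in GP_2(L)$. Since $\psi\in I^2L$, Arason's transfer theorem gives $\tra(\psi)\in I^2F$, so $\phi$ has trivial discriminant. Proposition~\ref{clif.prop} describes $\C_0(\phi)$: its factors are Brauer-equivalent to the corestriction $N_{L/F}(\C(\psi))$, and as $\C(\psi)$ is a quaternion algebra over $L$ (because $\psi$ is similar to a $2$-fold Pfister form), its corestriction has index dividing~$4$ by Rosset--Tate. This gives both conditions in (i).

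For (i)$\Rightarrow$(ii), set $(B,\tau):=(\C_+(\phi),\gamma_+)$, a degree $8$ central simple $F$-algebra with orthogonal involution of trivial discriminant. Its index equals the index of $\C(\phi)$, hence is at most $4$. Triality~\cite[(42.3)]{KMRT} places $(B,\tau)$ in a cyclically symmetric triple with $\Ad_\phi$ and $(\C_-(\phi),\gamma_-)$. Because one partner, $\Ad_\phi$, lives on a split algebra, the triality action forces $(B,\tau)$ to be totally decomposable as an algebra with involution. Theorem~\ref{dec.thm} then yields $\lambda\in F^\times$ and a biquaternion $(D,\theta)$ with
\[(B,\tau)\simeq(D,\theta)\otimes\Ad_{\pform{\lambda}}.\]
Setting $L:=F[X]/(X^2-\lambda)$, a quadratic étale $F$-algebra, one extracts $\psi\in GP_2(L)$ from $(D,\theta)$ (via its Clifford/Albert data base-changed to $L$) so that Proposition~\ref{clif.prop}, applied to $\tra(\psi)$, reproduces exactly this decomposition of $(\C_+(\tra(\psi)),\gamma_+)$. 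Hence $(\C_+(\phi),\gamma_+)\simeq(\C_+(\tra(\psi)),\gamma_+)$, and a triality-based recognition, matching discriminants and one similitude factor, identifies $\phi$ with $\tra(\psi)$.

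The principal obstacle is the triality step showing that $(\C_+(\phi),\gamma_+)$ is totally decomposable: this is not automatic from degree and index bounds alone on an orthogonal involution of trivial discriminant, but follows from KMRT~(42.3) via the splitness of $\Ad_\phi$, which is precisely the assumption that $\phi$ is an honest quadratic form rather than just an algebra with involution. Once total decomposability is in hand, the remainder of the proof is a structured back-and-forth between Theorem~\ref{dec.thm} and Proposition~\ref{clif.prop}, with the split étale case $L=F\times F$ handled uniformly through the definition $\tra(\psi,\psi')=\psi+\psi'$.
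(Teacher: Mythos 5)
Your overall strategy for (i)\(\Rightarrow\)(ii) is the same as the paper's: pass to one component \((A,\sigma)\) of \(\C_0(\phi)\), use triality and \cite[(42.11)]{KMRT} to see that \((A,\sigma)\) is totally decomposable (this step you get right — splitness of \(\Ad_\phi\) as one component of \(\C(A,\sigma)\) is precisely what the criterion requires), then invoke Theorem~\ref{dec.thm} to write \((A,\sigma)\simeq(D,\theta)\ot\Ad_{\pform{\lambda}}\), and finally invert Proposition~\ref{clif.prop}. Your (ii)\(\Rightarrow\)(i) via the Arason transfer formula and Rosset--Tate is a reasonable substitute for the computation the paper cites from \cite{IK00}.

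However, there is a concrete error in the crucial step: you set \(L:=F[X]/(X^2-\lambda)\), whereas the quadratic \'etale extension must be \(L:=F[X]/(X^2-d)\) where \(d=d_\pm(\theta)\) is the \emph{discriminant of} \(\theta\), not the scalar \(\lambda\). This is not a cosmetic choice. Proposition~\ref{clif.prop} produces
\[(\C_+(\tr_\star(\psi)),\gamma_+)\simeq \Ad_{\pform{-dN_{L/F}(\mu)}}\ot N_{L/F}(\C_+(\psi),\gamma_+),\]
and the only way to match the factor \(N_{L/F}(\C_+(\psi),\gamma_+)\) with \((D,\theta)\) is to use the exceptional isomorphism \(A_1^2\equiv D_2\) of \cite[(15.7)]{KMRT}: \((D,\theta)\simeq N_{L/F}(Q,\gamma)\) where \(Q\) is the Clifford algebra of \((D,\theta)\), a quaternion algebra over the discriminant extension \(L=F(\sqrt d)\). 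Your \(L\) based on \(\lambda\) has no such relation to \((D,\theta)\). Moreover, with the correct \(L\), one still has to explain why the \(\Ad_{\pform{\lambda}}\) factor can be rewritten in the form \(\Ad_{\pform{-dN_{L/F}(\mu)}}\); this requires knowing that \(\lambda\in N_{L/F}(L^\times)\), which the paper extracts from Tao's computation of \(\C(A,\sigma)\) as a tensor product: the components are Brauer-equivalent to \((d,\lambda)\) and \((d,\lambda)\ot A\), and since \(A\) has index \(4\) the split component must be \((d,\lambda)\), i.e.\ \(\lambda\) is a norm from \(L\). Your proposal omits this norm condition entirely, and without it the back-substitution into Proposition~\ref{clif.prop} does not close. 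Finally, ``one extracts \(\psi\in GP_2(L)\) from \((D,\theta)\) via its Clifford/Albert data'' and ``a triality-based recognition, matching discriminants and one similitude factor'' are placeholders for the two precise steps above (the identification \(\psi=\qform{\delta\mu}n_Q\), and the second application of \cite[(42.3)]{KMRT} giving \(\Ad_\phi\simeq\Ad_{\tr_\star(\psi)}\) hence similarity of \(\phi\) and \(\tr_\star(\psi)\)); as written, they do not constitute an argument.
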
 
If $\phi=\tra(\psi)$ for some $\psi\in GP_2(L)$, it follows from some
direct computation made in~\cite[\S 16]{IK00} that $\phi$ has trivial
discriminant and Clifford invariant of index $\leq 4$. 

Assume conversely that $\phi$ has trivial discriminant. 
By the Arason-Pfister Hauptsatz, $\phi$ is in $GP_3(F)$ if and only if
it has
trivial Clifford invariant. More generally, it is well-known that
$\phi$ decomposes as $\phi=\pform{a}q$ for some $a\in F^\times$ and some $4$-dimensional
quadratic form $q$ over $F$ if and only if its Clifford invariant has index $\leq
2$ (see for instance~\cite[Ex 9.12]{Kne2}).  
Hence, in both cases, $\phi$ decomposes as a sum $\phi=\pi_1+\pi_2$ of two forms
$\pi_1,\pi_2\in GP_2(F)$. 
This proves that condition (ii) holds with $L=F\times F$. 

In section~\ref{proof.section} below, we finish this proof by treating the index
$4$ case. This part of the proof differs from the argument given
in~\cite{IK00}. 
In particular, we do not use Rost's description of $14$-dimensional
forms in $I^3F$. 

\section{Clifford algebra of the transfer of a quadratic form} 

Let $L/F$ be a quadratic field extension.  
By Arason~\cite[4.18]{A}, for any quadratic form $\psi\in GP_2(L)$,
the Clifford invariant of the transfer $\tra(\psi)$ coincides with the
corestriction of the Clifford invariant of
$\psi$. 
In this section, we extend this result, taking into account the
algebras with involution rather than just the Brauer classes. 
More precisely, we prove: 
\begin{prop}
\label{clif.prop} 
Let $L=F[X]/(X^2-d)$ be a quadratic \'etale extension of $F$. 
Consider a quadratic
form $\psi$ over $L$ with $\dim(\psi)\equiv 0\bmod 4$ and $d_\pm(\psi)=1$, so
that its even Clifford algebra decomposes as 
\[(\C_0(\psi),\gamma)\simeq(\C_+(\psi),\gamma_+)\times(\C_-(\psi),\gamma_-),\mbox{
  with }(\C_+(\psi),\gamma_+)\simeq(\C_-(\psi),\gamma_-).\]   
For any $\la\in L^\times$ represented by $\psi$, the two components of
the even Clifford
algebra of the transfer of $\psi$ are both isomorphic to 
\[(\C_+(\tr_\star(\psi)),\gamma_+)\simeq
\Ad_\pform{-d N_{L/F}(\lambda)}\otimes N_{L/F}(\C_+(\psi),\gamma_+).\]
\end{prop}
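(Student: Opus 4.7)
The plan is to reduce the claim to an explicit comparison after scalar extension to $L$, using the chosen element $\la$ to diagonalise off a one-dimensional summand of $\psi$ that will account for the quaternion twist. Since $\la\in L^\times$ is represented by $\psi$, we may write $\psi\simeq\qform{\la}\perp\psi_1$ with $\dim\psi_1=\dim\psi-1$, so that $\tra(\psi)\simeq\tra(\qform{\la})\perp\tra(\psi_1)$. A direct calculation in the $F$-basis $\{1,\sqrt d\}$ of $L$ shows that the $2$-dimensional form $\tra(\qform{\la})$ has discriminant $d\,N_{L/F}(\la)$, hence is similar to $\pform{-d\,N_{L/F}(\la)}$; this identifies $\Ad_{\tra(\qform{\la})}$ with the quaternion factor appearing in the statement.

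To pin down the even Clifford algebra with involution, I would pass to $L$ via the canonical splitting $L\ot_FL\simeq L\times L$, under which $\tra(\psi)\ot_FL\simeq\psi\perp\psi^\iota$, where $\iota$ denotes the nontrivial $F$-automorphism of $L$. Using the standard $\mathbb{Z}/2$-graded tensor-product description of the even Clifford algebra of an orthogonal sum, together with the factorisations $\C_0(\psi)\simeq\C_+(\psi)\times\C_-(\psi)$ and $\C_0(\psi^\iota)\simeq\C_+(\psi)^\iota\times\C_-(\psi)^\iota$, one extracts an $L$-algebra-with-involution isomorphism
\[
(\C_+(\tra\psi),\gamma_+)\ot_FL\;\simeq\;(\C_+(\psi),\gamma_+)\ot_L(\C_+(\psi),\gamma_+)^\iota.
\]
By the definition of the norm via Galois descent from $L\ot_FL$, the right-hand side is the scalar extension to $L$ of $N_{L/F}(\C_+(\psi),\gamma_+)$. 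Hence $(\C_+(\tra\psi),\gamma_+)$ agrees, up to an $F$-descent twist of degree $2$, with $\Ad_{\pform{-d\,N_{L/F}(\la)}}\ot N_{L/F}(\C_+(\psi),\gamma_+)$, and both the Brauer class (by Arason~\cite[4.18]{A}) and the discriminant computation from the first step force the twist to coincide with the explicit quaternion factor found above. Since the setup already provides $(\C_+(\psi),\gamma_+)\simeq(\C_-(\psi),\gamma_-)$, the analogous statement for $\C_-(\tra\psi)$ follows.

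The main obstacle will be the bookkeeping needed to track consistently the components $\C_+$ and $\C_-$ on both sides and to verify that the descent data match. One must follow the canonical involution $\gamma$ through the central splittings of $\C_0(\psi)$ and $\C_0(\tra\psi)$, and check that the natural $F$-structure on $(\C_+(\psi),\gamma_+)\ot_L(\C_+(\psi),\gamma_+)^\iota$ given by swapping the two tensor factors really reproduces the norm construction of~\cite[\S3.B]{KMRT} and not some inner twist of it; the chosen diagonalisation of $\psi$, and the corresponding factor $\tra(\qform{\la})$, provides the explicit bridge that makes the descent data compatible and pins the residual scalar to $-d\,N_{L/F}(\la)$.
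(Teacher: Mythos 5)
There is a genuine gap, on two counts. First, the displayed ``key'' isomorphism
\[
(\C_+(\tra\psi),\gamma_+)\ot_F L\ \simeq\ (\C_+(\psi),\gamma_+)\ot_L(\C_+(\psi),\gamma_+)^\iota
\]
is dimensionally false: writing $n=\dim_L\psi$, the left side has $L$-dimension $2^{2n-2}$ while the right side has $L$-dimension $2^{2n-4}$. The discrepancy is exactly the missing rank-$4$ quaternion factor. This is not a bookkeeping slip but a conceptual one: the $\mathbb{Z}/2$-graded tensor description gives $\C_0(\phi\perp\phi')\simeq\C_0(\phi)\ot\C_0(\phi')\oplus\C_1(\phi)\ot\C_1(\phi')$, and the odd-times-odd part is precisely what produces the extra split quaternion algebra in the component decomposition (the element $vv'\varepsilon$ in the paper's computation, $v,v'$ odd). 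Your subsequent descent discussion is built on this incorrect identification, so it would have to be redone. The correct split-case statement, which the paper proves as Lemma~\ref{etale.lem}, is $\C_+(\phi+\phi')\simeq\Ad_\pform{-\la\la'}\ot\C_+(\phi)\ot\C_+(\phi')$.

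Second, even with the factor restored, the descent argument as stated does not close. Knowing that two $F$-algebras with orthogonal involution become isomorphic after scalar extension to $L$ and that they share the same Brauer class and discriminant over $F$ does not determine their $F$-isomorphism type in the degrees at issue here; one has to identify the actual $F$-form, not merely constrain it by invariants. The paper avoids this by making the descent completely explicit: it realizes $(\C(\tra\psi),\gamma)$ as the fixed subalgebra $(\C(\psi+\io\psi))^s$ of a switch semilinear automorphism $s$ (Lemma~\ref{clif.lem}), checks that $s$ respects the tensor decomposition coming from the split case, observes that the fixed points of the swap on $\C_+(\psi)\ot\io\C_+(\psi)$ \emph{are}, by definition, $N_{L/F}(\C_+(\psi),\gamma_+)$, and then separately computes the $s$-fixed points of the quaternion factor to produce $\Ad_\pform{-dN_{L/F}(\la)}$. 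Your observation that $\tra\qform{\la}$ is similar to $\pform{-dN_{L/F}(\la)}$ is correct and is a nice sanity check, but you would still need to exhibit the element inside $\C_+(\tra\psi)$ generating this quaternion factor (the role played by $\delta\,e_n\io e_n\,\varepsilon$ in the paper) rather than infer its presence from invariants of the total algebra. If you carry out the fixed-point computation on each tensor factor, you recover essentially the paper's proof; without it, the argument is incomplete.
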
 

\begin{proof} 
In the split \'etale case $L=F\times
F$,  
the quadratic form $\psi$
is a
couple $(\phi,\phi')$ of two quadratic forms over $F$ with 
\[\dim(\phi)=\dim(\phi')\equiv 0\bmod4\quad\mbox{and}\quad
d_\pm(\phi)=d_\pm(\phi')=1\in F^\star/F^{\star 2}.\] 
Pick $\la$ and $\la'$ in $F$ respectively represented by $\phi$ and
$\phi'$; 
the norm $N_{F\times F/F}(\la,\la')$ is $\la\la'$. 
So the following lemma proves the proposition in that case: 
\begin{lem} 
\label{etale.lem} 
Let $\phi$ and $\phi'$ be two quadratic forms over 
$F$ of the same dimension $n\equiv 0\bmod4$ and trivial discriminant. 
For any $\la$ and $\la'\in F^\times$, respectively represented by
$\phi$ and $\phi'$, 
the components of the even Clifford algebra of
the orthogonal sum $\phi+\phi'$ are isomorphic to 
\[(\C_+(\phi+\phi'),\gamma_+)\simeq\Ad_\pform{-\la\la'}\otimes(\C_+(\phi),\gamma_+)\otimes(\C_+(\phi'),\gamma_+).\]
\end{lem} 
\begin{proof}[Proof of Lemma~\ref{etale.lem}]

Denote by $V$ and $V'$ the underlying quadratic spaces. 
The natural embeddings $V\hookrightarrow V\oplus V'$ and 
$V'\hookrightarrow V\oplus V'$ induce $F$-algebra homomorphisms 
\[\C(\phi)\rightarrow\C(\phi+\phi')\mbox{ and }\C(\phi')\rightarrow \C(\phi+\phi').\] 
One may easily check that the images of the even parts
centralize each other, so that we get an $F$-algebra homomorphism 
\[(\C_0(\phi),\gamma)\otimes(\C_0(\phi'),\gamma)\rightarrow
(\C_0(\phi+\phi'),\gamma).\] 
Pick orthogonal bases $(e_1,\dots,e_n)$ of $(V,\phi)$ and
$(e'_1,\dots,e'_n)$ of $(V',\phi')$. 
The basis of $\C_0(\phi+\phi')$ consisting of products of an even
number of vectors of the set $\{e_1,\dots,e_n,e'_1,\dots,e'_n\}$ as described
in~\cite[V, cor 1.9]{Lam2}   
clearly contains the image of a basis of $\C_0(\phi)\otimes\C_0(\phi')$, so that
the homomorphism above is injective. 
In the sequel, we will identify $\C_0(\phi)$ and $\C_0(\phi')$ with
their images in $\C_0(\phi+\phi')$. 

Consider the element $z=e_1\dots e_n\in \C_0(\phi)$. 
As explained in~\cite[V, Thm2.2]{Lam2}, 
for any $v\in V$, one has $vz=-zv\in\C(\phi)$ and $z$ generates the
center of $\C_0(\phi)$. 
Since $\phi$ has dimension $0\bmod4$ and trivial discriminant, this
element $z$ is $\gamma$-symmetric, and multiplying $e_1$ by a scalar
if necessary, we may assume $z^2=1$. 
The two components of $\C_0(\phi)$ are 
$\C_+(\phi)=\C_0(\phi)\frac{1+ z}{2}$ and $\C_-(\phi)=\C_0(\phi)\frac{1-z}{2}$. 
Consider similarly $z'=e'_1\dots e'_n$, with $\gamma(z')=z'$ and
assume $z'^2=1$. 
The product $zz'$ also has square $1$ and generates the center of $\C_0(\phi+\phi')$. 
We denote by $\varepsilon$ the idempotent
$\varepsilon=\frac{1+zz'}{2}$, so that  
$\C_+(\phi+\phi')=\C_0(\phi+\phi')\varepsilon$ and
$\C_-(\phi+\phi')=\C_0(\phi+\phi')(1-\varepsilon)$. 

Let us now fix two vectors $v\in V$ and $v'\in V'$ such that 
$\phi(v)=\la$ and $\phi'(v')=\la'$. 
Since $\frac{1+z}{2}v^{-1}=v^{-1}\frac{1-z}{2}$, we have $vxv^{-1}\in
\C_-(\phi)$ for any $x\in\C_+(\phi)$. 
Using this identification between the two components, we may
diagonally embed $\C_+(\phi)$ in $\C_0(\phi)$ by considering  
$x\in\C_+(\phi)\mapsto x+vxv^{-1}\in\C_0(\phi).$
Similarly, we may embed $\C_+(\phi')$ in $\C_0(\phi')$ by 
$x'\in\C_+(\phi')\mapsto x'+v'x'v'^{-1}\in\C_0(\phi').$ 
Combining those two maps with the morphism
\[\C_0(\phi)\otimes\C_0(\phi')\rightarrow \C_0(\phi+\phi'),\] and the
projection \[y\in \C_0(\phi+\phi')\mapsto
y\varepsilon\in\C_+(\phi+\phi'),\] we get an algebra
homomorphism 
\[\begin{array}{ccc} 
\C_+(\phi)\otimes\C_+(\phi')&\rightarrow&\C_+(\phi+\phi'),\\
x\otimes x'&\mapsto&(x+vxv^{-1})(x'+v'x'v'^{-1})\varepsilon.\\
\end{array}\]

One may easily check on generators that this map is not trivial; hence
it is injective. To conclude the proof, it only remains to identify the
centralizer of the image, which by dimension count has degree $2$. 
It clearly contains 
$\frac{z+z'}{2}\varepsilon$ and $vv'\varepsilon$. 
Moreover, these two elements anticommute, have square $\varepsilon$
and $-\la\la'\varepsilon$, and are respectively symmetric and skew-symmetric under $\gamma$. Hence they generate a split quaternion
algebra, with orthogonal involution of discriminant $-\lambda\la'$,
which is isomorphic to $\Ad_\pform{-\la\la'}$. 
\end{proof} 
\medskip

This concludes the proof in the split \'etale case. 
Until the end of this section, we assume $L$ is a quadratic field
extension of $F$, with non-trivial $F$-automorphism denoted by $\iota$. 
To prove the proposition in this case, we will use the following
description of the transfer of a quadratic form and its Clifford
algebra. 

Let $\psi$ be any quadratic form
over $L$, defined on the vector space $V$. We
consider its conjugate ${}^\iota V=\{{}^\iota v,\ v\in V\}$ with the
following operations ${}^\iota v_1+{}^\iota v_2={}^\iota (v_1+v_2)$
and $\lambda\cdot{}^\iota v={}^\iota (\iota(\lambda)\cdot v)$,
for any $v_1,v_2$ and $v$ in $V$ and $\lambda\in L$. Clearly,
${}^\iota\psi({}^\iota v)=\iota(\psi(v))$ is a quadratic form on
${}^\iota V$. 
One may easily check from the definition given in~\cite[VII \S1]{Lam2} that
the quadratic form $\tra(\psi)$ is nothing but the restriction 
of $\psi+{}^\iota\psi$ to the $F$-vector space of fixed points
$(V\oplus{}^\iota V)^{s}$, where $s$ is the switch semi-linear automorphism defined on the direct sum
$V\oplus{}^\iota V$ by $s(v_1+{}^\iota v_2)=v_2+{}^\iota v_1$. 

Moreover, $s$ 
induces a semi-linear automorphism of order $2$ of the tensor algebra 
$T(V\oplus\io V)$ which preserves the ideal generated by 
the elements \[(v_1+\io v_2)\ot(v_1+\io v_2)-\big(\psi(v_1)+\io\psi(\io
v_2)\big).\] Hence, we get a semi-linear automorphism $s$ of order $2$
on the Clifford algebra $\C(\psi+\io\psi)$, which commutes with the
canonical involution. The set of fixed points
$\big(\C(\psi+\io\psi)\big)^s$ is an $F$-algebra; the involution
$\gamma$ restricts to an $F$-linear involution which we denote by $\gamma_s$. 
We then have: 
\begin{lem}
\label{clif.lem}
The natural embedding $(V\oplus\io V)\hookrightarrow
\C(\psi+\io\psi)$, restricted to $(V+\io V)^s$, 
induces an isomorphism of graded algebras 
\[\big(\C(\tra(\psi)),\gamma\big)\tilde\rightarrow\big( (\C(\psi+\io\psi))^s,\gamma_s).\]  
\end{lem} 
\begin{proof}[Proof of Lemma~\ref{clif.lem}]
The natural embedding $(V\oplus\io V)\hookrightarrow
\C(\psi+\io\psi)$ restricts to an injective map
$i:\,(V+\io V)^s\hookrightarrow \C(\psi+\io\psi)^s$, which clearly
satisfies 
\[i(w)^2=(\psi+\io\psi)(w)\quad\mbox{for any }w\in(V\oplus\io V)^s.\] 
By the universal property of Clifford algebras, it extends to a non-trivial algebra homomorphism $\C(\tra(\psi))\mapsto \C(\psi+\io\psi)^s$, which
clearly preserves the grading.  
Since $\C(\tra(\psi))$ is simple, and both algebras have the same
dimension, it is an isomorphism. 
Clearly, $\gamma$ coincides with $\gamma_s$ under this isomorphism. 
\end{proof} 

Hence, we want to describe one component of
$\C_0(\tr_\star(\psi))\simeq (\C_0(\psi+\io\psi))^s$. 
We proceed as in the split \'etale case. 
Fix an orthogonal basis $e_1,\dots e_n$ of $V$ over
$L$ such that $\psi(e_n)=\lambda$. The elements $\io e_1,\dots,\io
e_n$ are an orthogonal basis of $\io V$ and $\io\psi(\io
e_n)=\iota(\lambda)$. 
We may moreover assume that $z=e_1\dots e_n$ and $\io z=\io
e_1\dots\io e_n$ have square $1$. 
Since the idempotent $\varepsilon=\frac{1+z\io z}{2}\in
C_0(\psi+\io\psi)$  
satisfies $s(\varepsilon)=\varepsilon$, the semilinear automorphism
$s$ preserves each factor $\C_+(\psi+\io\psi)$ and
  $\C_-(\psi+\io\psi)$. Hence, the components of
$\C_0(\tr_\star(\psi))$ are 
\[\C_0(\tr_\star(\psi))=(\C_+(\psi+\io\psi))^s\times (\C_-(\psi+\io\psi))^s.\]
Moreover, by Lemma~\ref{etale.lem}, we have 
\[\C_+(\psi+\io\psi)\simeq\Ad_\pform{-\la\iota(\la)}\otimes(\C_+(\psi),\gamma)\otimes
(\C_+(\io\psi),\gamma),\]
and it remains to understand the action of the switch automorphism on
this tensor product. 
First, one may identify $\C_+(\io\psi)$ with the algebra
$\io\C_+(\psi)$ defined by 
\[\io\C_+(\psi)=\{\io x,\ x\in\C_+(\psi)\},\] with the
operations 
\[\io x+\io y=\io(x+y),\ \ \io x\io y=\io(xy)\mbox{ and }\io(\la
x)=\iota(\la)\io x,\] for all $x,y\in\C_+(\psi)$ and $\la\in L$.  
Clearly, the switch automorphism acts on the tensor product
\[\C_+(\psi)\otimes
\C_+(\io\psi)\simeq \C_+(\psi)\otimes
\io\C_+(\psi),\]
by \[s(x\otimes\io y)=y\otimes \io x,\] 
and by definition of the corestriction (see \cite[3.B]{KMRT}), the
$F$-subalgebra of fixed points is 
\[\big((\C_+(\psi),\gamma)\otimes
(\io\C_+(\psi),\gamma)\big)^s=N_{L/F}(\C_+(\psi),\gamma).\]

It remains to understand the action of the switch on the centralizer, 
which is the split quaternion algebra over $L$ generated
by $x=\frac{z+\io z}{2}\varepsilon$ and $y=e_n\io e_n\varepsilon$. 
The element $x$ clearly is $s$-symmetric, while $y$ satisfies
$s(y)=-y$. 
Let $\delta$ be a generator of the quadratic extension $L/F$, so that
$\iota(\delta)=-\delta$ and $\delta^2=d$. 
Since the switch map $s$ is $L/F$ semi-linear,
we may replace $y$ by $\delta y$ which now satisfies $s(\delta
y)=\delta y$. 
Hence, the set of fixed
points under $s$ is the split quaternion
algebra over $F$ generated by $x$ and $\delta y$. 
Since $(\delta y)^2=-dN_{L/F}(\la)$, it is 
isomorphic to $\Ad_\pform{-dN_{L/F}(\la)}$. 
\end{proof}

\section{Proof of the decomposability theorem} 
\label{dec.sec} 

In this section, we finish the proof of Theorem~\ref{dec.thm}. 
Let $(A,\sigma)=\otimes_{i=1}^3(Q_i,\sigma_i)$ be a product of three
quaternion algebras with orthogonal involution. 
We assume that $A$ has index $4$, so that it is Brauer-equivalent to 
a biquaternion division
algebra $D$. 
We have to prove that $(A,\sigma)$ is isomorphic to 
$(D,\theta)\otimes\Ad_\pform{\la}$ for a well chosen involution
$\theta$ on $D$ and some $\la\in F^\times$. 

The algebra $D$ is endowed with an orthogonal involution
$\tau$, and we may represent 
\[(A,\sigma)=(\End_D(M),\ad_h),\] for some $2$-dimensional hermitian
module $(M,h)$ over $(D,\tau)$. 
Let us consider a diagonalisation $\qform{a_1,a_2}$ of $h$, and
define \[\theta=\Int(a_1^{-1})\circ\tau.\] 
With respect to this new involution, we get another representation 
\[(A,\sigma)=(\End_D(M),\ad_{h'}),\] where $h'$ is a hermitian form 
over $(D,\theta)$ which diagonalises as $h'=\qform{1,-a}$ for some
$\theta$-symmetric element $a\in D^\times$. The theorem now follows
from the following lemma: 
\begin{lem} \label{inv.lem} 
The involutions $\theta$ and $\theta'=\Int(a^{-1})\circ\theta$ of the
biquaternion algebra $D$ are
conjugate.  
\end{lem}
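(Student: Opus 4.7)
The plan is to reduce the lemma to finding $g \in D^\times$ and $\mu \in F^\times$ with $a = \mu\,\theta(g)\,g$, and then to establish this equality by exploiting the totally decomposable structure of $(A,\sigma)$. For the reduction, a direct computation gives $\Int(g) \circ \theta \circ \Int(g)^{-1} = \Int(g\,\theta(g)) \circ \theta$ for any $g \in D^\times$; combined with the invisibility of central scalars under $\Int$, the identity $a = \mu\,\theta(g)\,g$ is equivalent to $\Int(g^{-1}) \circ \theta \circ \Int(g) = \Int(a^{-1}) \circ \theta = \theta'$, which is precisely the conjugacy of $\theta$ and $\theta'$. In hermitian-form language, the claim is that the rank-one form $\qform{a}$ over $(D,\theta)$ is isometric to $\qform{\mu}$ for some $\mu \in F^\times$.

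My first step would be to verify the necessary discriminant condition. By the standard formula for the discriminant of the adjoint of a hermitian form over a biquaternion algebra with orthogonal involution,
\[
\disc(\sigma)\;=\;\Nrd_D(-a)\cdot\disc(\theta)^{2}\;=\;\Nrd_D(a)\pmod{F^{\times 2}}.
\]
Since a totally decomposable orthogonal involution of degree $8$ has trivial discriminant, it follows that $\Nrd_D(a)\in F^{\times 2}$, and in particular $\disc(\theta') = \disc(\theta)$.

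The substantive step is to upgrade this discriminant equality to genuine conjugacy. On a biquaternion division algebra, orthogonal involutions are classified up to conjugation by their discriminant together with the even Clifford algebra endowed with its canonical involution, so equal discriminants alone do not suffice. My approach is to compute the even Clifford algebra with involution of $\sigma$ in two ways: from the decomposition $(A,\sigma) = \bigotimes_{i=1}^{3}(Q_i,\sigma_i)$, and from the representation $\sigma = \ad_{\qform{1,-a}}$ on $\End_D(M)$. Matching these two descriptions should determine the Clifford data of $(D,\theta)$ relative to $a$ and force the Clifford invariants of $\theta$ and $\theta'$ to coincide, yielding the conjugator $g$.

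The principal obstacle lies in this second step. For a general $\theta$-symmetric element $a \in D^\times$ with $\Nrd_D(a)$ a square, $\theta$ and $\Int(a^{-1})\circ\theta$ need not be conjugate on a biquaternion division algebra, so the totally decomposable hypothesis on $\sigma$ must be used in an essential way. Verifying the agreement of Clifford invariants---or, alternatively, constructing $g$ explicitly from a well-chosen $\sigma$-skew element arising from one of the quaternion factors $(Q_i,\sigma_i)$---is the real technical core of the lemma.
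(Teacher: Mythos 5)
Your framework is right, and you correctly locate the classifying invariant: by a result of Lewis and Tignol~\cite[Prop.~2]{LT}, two orthogonal involutions on a biquaternion algebra over $F$ are conjugate if and only if their Clifford algebras are isomorphic as $F$-algebras (the discriminant condition you check is subsumed in this). But the proposal stops exactly where the work begins. You write that matching the two descriptions of the Clifford data ``should'' force the Clifford invariants of $\theta$ and $\theta'$ to coincide, and you candidly flag this as ``the real technical core of the lemma''---and then leave it unproven. So the essential comparison of $\C(D,\theta)$ and $\C(D,\theta')$ is missing, and the argument is not complete.

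Here is the mechanism the paper uses to close precisely this gap. The representation $\sigma = \ad_{\qform{1,-a}}$ exhibits $(A,\sigma)$ as an orthogonal sum, in the sense of Dejaiffe~\cite{D}, of $(D,\theta)$ and $(D,\theta')$. The discriminant of an orthogonal sum is the product of the discriminants~\cite[Prop.~2.3.3]{D}, and total decomposability of $(A,\sigma)$ forces $\disc\sigma=1$ by~\cite[(42.11)]{KMRT}; hence $\disc\theta=\disc\theta'$, so the centers of $\C(D,\theta)$ and $\C(D,\theta')$ may be identified (in two ways). Lewis and Tignol computed the Clifford algebra of such an orthogonal sum~\cite[p.~265]{LT} in terms of $\C(D,\theta)$ and $\C(D,\theta')$, and by~\cite[Lem.~1]{LT} one of its two components is split if and only if $\C(D,\theta)\simeq\C(D,\theta')$ or $\C(D,\theta)\simeq\io\C(D,\theta')$, the alternative depending on the chosen identification of centers. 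Total decomposability enters again: by~\cite[(42.11)]{KMRT} the Clifford algebra of $(A,\sigma)$ does have a split component. Either alternative yields an $F$-algebra isomorphism $\C(D,\theta)\simeq\C(D,\theta')$, and~\cite[Prop.~2]{LT} then gives the conjugacy of $\theta$ and $\theta'$. This orthogonal-sum computation is the concrete step your outline calls for but does not supply.
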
 
Indeed, assume there exists $u\in A^\times$ such that 
$\theta=\Int(u)\circ\theta'\circ\Int(u^{-1})$. 
We then have $\theta=\Int(ua^{-1})\circ\theta\circ\Int(u^{-1})
=\theta\circ \Int(\theta(u)^{-1}au^{-1})$. 
Hence, there exists $\la\in F^\times$ such that 
$\theta(u)^{-1}au^{-1}=\la$, that is $a=\lambda\theta(u)u$. 
This implies that the hermitian form $h'=\qform{1,-a}$ over
$(D,\theta)$ is isometric to $\qform{1,-\la}$. 
Since $\la\in F^\times$, we get 
$(A,\sigma)=(\End_D(M),\ad_{\qform{1,-\la}})=(D,\theta)\otimes\Ad_\pform{\la}$,
and it only remains to prove the lemma.    

\begin{proof}[Proof of Lemma~\ref{inv.lem}] 

We want to compare the orthogonal involutions $\theta$ and $\theta'$
of the biquaternion algebra $D$. 
By~\cite[Prop. 2]{LT}, they are conjugate if and only if their
Clifford algebras $\C$ and $\C'$ are isomorphic as $F$-algebras. 
This can be proven as follows. 

Since $(A,\sigma)$ is a product of three quaternion algebras with
involution, we know 
from~\cite[(42.11)]{KMRT} that the discriminant of $\sigma$ is $1$ and
its Clifford algebra has one split component. 

On the other hand, the representation $(A,\sigma)=(\End_D(M),\ad_{\qform{1,-a}})$
tells us that $(A,\sigma)$ is an orthogonal sum, as in~\cite{D}, of $(D,\theta)$ and
$(D,\theta'$). 
Hence its invariants can be computed in terms of those of $(D,\theta)$
 and $(D,\theta')$. 
By \cite[Prop. 2.3.3]{D}, the discriminant of $\sigma$ is the
product of the discriminants of $\theta$ and $\theta'$. 
So $\theta$ and $\theta'$ have the same discriminant, and we may
identify the centers $Z$ and $Z'$ of their Clifford algebras in two
different ways. 
We are in
the situation described in~\cite[p. 265]{LT}, where the Clifford
algebra of such an orthogonal sum is computed. 
In particular, since one component of the Clifford algebra of
$(A,\sigma)$ is split, it follows from~\cite[Lem 1]{LT} that 
\[\C\simeq\C'\qquad\mbox{ or }\qquad\C\simeq\io\C',\]
depending on the chosen identification between $Z$ and $Z'$. 
In both cases, $\C$ and $\C'$ are isomorphic as $F$-algebras, and this concludes
the proof.  
\end{proof}

\section{A new proof of Izhboldin and Karpenko's theorem} 
\label{proof.section} 

Let $\phi$ be an $8$-dimensional quadratic form over $F$ with trivial
discriminant and Clifford invariant of index $4$. 
We denote by $(A,\sigma)$ one component of its even Clifford algebra, 
so that 
\[(\C_0(\phi),\gamma)\simeq (A,\sigma)\times (A,\sigma),\]
where $A$ is an index $4$ central simple algebra over $F$, with
orthogonal involution $\sigma$.  

By triality~\cite[(42.3)]{KMRT}, the involution $\sigma$ has trivial discriminant and its Clifford
algebra is
\[\C(A,\sigma)=\Ad_\phi\times (A,\sigma).\]
In particular, it has a split component, so that the algebra with involution $(A,\sigma)$ is
isomorphic to a tensor product of three quaternion algebras with
involution~(see \cite[(42.11)]{KMRT}). 
Hence we can apply our decomposability theorem~\ref{dec.thm}, 
and write $(A,\sigma)=(D,\theta)\otimes \Ad_\pform{\la}$ for some
biquaternion division algebra with orthogonal involution $(D,\theta)$
and some $\la\in F^\times$. 

Let us denote by $d$ the discriminant of $\theta$, and 
let $L=F[X]/(X^2-d)$ be the corresponding quadratic \'etale
extension. Consider the image $\delta$ of $X$ in $L$. 
By Tao's computation of the Clifford
algebra of a tensor product~\cite[Thm. 4.12]{Tao:Clifford}, the components of 
$\C(A,\sigma)$ are Brauer-equivalent to the quaternion algebra
$(d,\la)$ over $F$ and the tensor product
$(d,\la)\otimes A$. 
Since $A$ has index $4$, the split component has to be
$(d,\la)$, so that $\la$ is a norm of $L/F$, say $\la=N_{L/F}(\mu)$. 

Consider now the Clifford algebra of $(D,\theta)$. It is a quaternion
algebra $Q$ over $L$, endowed with its canonical (symplectic)
involution $\gamma$. Denote by $n_Q$ the norm form of $Q$, that is
$n_Q=\pform{\alpha,\beta}$ if $Q=(\alpha,\beta)_L$. 
It is a $2$-fold Pfister form and for any $\ell\in L^\star$, 
$(\C_+(\qform{\ell}n_Q),\gamma_+)\simeq (Q,\gamma)$. 
Moreover, by the equivalence of categories $A_1^2\equiv D_2$ described
in~\cite[(15.7)]{KMRT}, the algebra with involution  
$(D,\theta)$ is canonically isomorphic to $N_{L/F}(Q,\gamma)$. 

Hence we get that
$(A,\sigma)=N_{L/F}(Q,\gamma)\otimes\Ad_\pform{-dN_{L/F}(\delta\mu)}$. 
By Proposition~\ref{clif.prop}, this implies that 
\[(A,\sigma)\times(A,\sigma)\simeq(\C_0(\tr_\star(\psi)),\gamma),\]
where $\psi=\qform{\delta\mu}n_Q$. 
Applying again triality~\cite[(42.3)]{KMRT}, we get that the split
component $\Ad_\phi$ of the Clifford algebra of $(A,\sigma)$ also is
isomorphic to $\Ad_{\tr_\star(\psi)}$, so that the quadratic
forms $\phi$ and $\tr_\star(\psi)$ are similar. 
This concludes the proof since $\psi$ belongs to $GP_2(L)$. 

\begin{remark}
Let $\phi$ and $(A,\sigma)$ be as above, and 
let $L=F[X]/(X^2-d)$ be a fixed quadratic \'etale extension of $F$. 
It follows from the proof that the quadratic form $\phi$ is isometric to
the transfer of a form $\psi\in GP_2(L)$ if and only if $(A,\sigma)$ admits a decomposition
$(A,\sigma)=\Ad_\pform{\la}\otimes (D,\theta)$, with
$d_\pm(\theta)=d$. 
In particular, the quadratic form $\phi$ is a sum of two
forms similar to $2$-fold Pfister forms exactly when the algebra with
involution $(A,\sigma)$ admits a decomposition as
$(D,\theta)\otimes\Ad_\pform{\la}$ with $\theta$ of discriminant $1$,
that is when it decomposes as a tensor product of three quaternion
algebras with involution, with one split factor. 

Such a decomposition does not always exist, as was shown by
Sivatski~\cite[Prop 5]{Siv}. 
This reflects the fact that $8$-dimensional quadratic forms $\phi$ 
with trivial discriminant and Clifford algebra of index $\leq 4$ 
do not always decompose as a sum of two forms similar to two-fold
Pfister forms (see~\cite[\S 16]{IK00} and~\cite{HT} for explicit examples). 
\end{remark}

\providecommand{\bysame}{\leavevmode ---\ }
\providecommand{\og}{``}
\providecommand{\fg}{''}
\providecommand{\smfandname}{et}
\providecommand{\smfedsname}{\'eds.}
\providecommand{\smfedname}{\'ed.}
\providecommand{\smfmastersthesisname}{M\'emoire}
\providecommand{\smfphdthesisname}{Th\`ese}

\end{document}